\theoremstyle{plain}
\numberwithin{equation}{section}
\newtheorem{theorem}{Theorem}[section]
\newtheorem{lemma}[theorem]{Lemma}
\definecolor{darkred}{rgb}{0.8,0,0}
\definecolor{darkblue}{rgb}{0,0,0.7}
\definecolor{darkgreen}{rgb}{0,0.4,0}
\newcommand{\KKK}{\color{black}}
\newcommand{\R}{{\mathbb R}}
\newcommand{\un}{{\rm 1\kern -2.5pt l}}
\newcommand{\xxi}{{\mbox{\boldmath$\xi$}}}
\newcommand{\pphi}{{\mbox{\boldmath$\varphi$}}}
\newcommand{\ppsi}{{\mbox{\boldmath$\psi$}}}
\def\u{\mathbf{u}}
\def\uu{\mathbf{uu}}
\def\yy{\mathbf{y}}
\def\xxi{\boldsymbol{\xi}}
\def\uu{\mathbf{u}}
\def\R{{\mathbb R}}
\def\R{{\mathbb R}}
\def\spt{\mathop{{\rm spt}}\nolimits}
\def\dv{\mathop{{\rm div}}\nolimits}
\def\y{\mathbf y}
\def\u{\mathbf{u}}
\def\v{\mathbf{v}}
\def\v{{\bf v}}
\def\x{{\bf x}}
\def\wconv{\rightharpoonup}
\renewcommand{\epsilon}{\varepsilon}
\newcommand{\beeq}{\begin{equation}}
\newcommand{\eneq}{\end{equation}}
\newcommand{\bear}{\begin{array}}
\newcommand{\enar}{\end{array}}
\newcommand{\bema}{\begin{displaymath}}
\newcommand{\enma}{\end{displaymath}}
\newcommand{\beea}{\begin{eqnarray}}
\newcommand{\enea}{\end{eqnarray}}
\newcommand{\om}{\Omega}
\newcommand{\lab}[1]{ \label{#1} }
\def\wconv{\rightharpoonup}
\title[]{Newton's second law  as  limit of 
variational problems}
   \author[]{Edoardo Mainini and Danilo Percivale}
 \address{ Edoardo Mainini and Danilo Percivale} 
  \address{ Dipartimento di   Ingegneria Meccanica, Energetica, Gestionale e dei Trasporti (DIME), \vspace{-0.3cm}} 
 \address{Universit\`{a} degli Studi di Genova,
 Via all'Opera Pia, 15 - 16145 Genova Italy.\bigskip}
  \email{edoardo.mainini@unige.it;  percivale@dime.unige.it}
\date{}  
\subjclass{}
\begin{document}
 \maketitle
\begin{abstract} \vspace{-0.5cm}
We show that the solution of Cauchy problem for the classical ode $m \mathbf y''=\mathbf f$ can be obtained as limit of minimizers of exponentially weighted convex variational integrals. This complements the known results about weighted inertia-energy  approach to Lagrangian mechanics and hyperbolic equations.
\end{abstract}
\begin{center}
\end{center}
\vskip0.5cm
\section{Introduction and statement of the result}

\let\thefootnote\relax\footnotetext{\noindent\!\!\!\!\!\!\!\!\! 
 {\it 2020 AMS Classification Numbers}: 49J45, 70H30.	\\
 {\it Key words and phrases}: Calculus of Variations, Newton's second law,
 Weighted Variational Integrals.}
  
  

Let $\mathbf f\in L^\infty(\R^+;\R^N)$,  $\u_0\in\R^N$, $\v_0\in\R^N$,  $m>0$. Let us consider the Cauchy problem
\beeq\lab{newton0}\left\{\begin{array}{ll}\displaystyle & m{\y''}=\mathbf f,\qquad t>0\\
&\\
& \y(0)=\u_0,\qquad { \y'}(0)=\v_0
\end{array}\right.
\eneq
governing the motion of a material point of mass $m$ subject to the force field $\mathbf f$.
Our goal is to show that the solution to \eqref{newton0} is the limit as $h\to+\infty$ of the minimizers of the following functionals defined on trajectories $\mathbf y:\R^+\to\R^N$
\[\displaystyle \frac m{2h^2}\int_0^{+\infty} |\y''(t)|^2 e^{-ht}\,dt- \int_0^{+\infty} \mathbf f_h(t) \cdot \y(t) e^{-ht}\,dt, \qquad h\in\mathbb N,\]
%
subject to the same initial conditions,
as soon as $(\mathbf f_h)_{h\in\mathbb N}\subset L^\infty(\mathbb R^+;\R^N)$ is a sequence such that $\mathbf f_h\wconv \mathbf f$ in $w^*-L^\infty(\R^+;\R^N)$ as $h\to+\infty$. 
More precisely, letting 
\begin{equation*}
\mathcal A:=\left\{\v\in W^{2,1}_{loc}(\R^+;\R^N): \int_0^{+\infty}|\v''(t)|^2 e^{-t}\,dt < +\infty\right\}
\end{equation*} 
and  $\mathbf f_h\in L^\infty(\mathbb R^+;\R^N)$ for every $h\in\mathbb N$, we may define the rescaled energy functional (see also Lemma \ref{scaling} below)
 \begin{equation*}
\mathcal J_h(\u):=\left\{\begin{array}{ll}&\displaystyle\frac{m}{2}\int_0^{+\infty}|\u''(t)|^2\,e^{-t}\,dt-
h^{-2}\int_0^{+\infty}\mathbf f_h(h^{-1}t)\cdot\u(t)\,e^{-t}\,dt\quad \text{if}\ \uu \in\mathcal A\\
&\\
& +\infty\quad \text{otherwise in}\ W^{2,1}_{loc}(\R^+;\R^N),
\end{array}\right.
\end{equation*}
and we will prove the following result.
\begin{theorem}\lab{main} For every $h\in\mathbb N$, there exists a unique solution $\overline\u_h$ to the problem $$min\{\mathcal J_h(\u): \u\in \mathcal A,\ \u(0)=\u_0,\ \u' (0)=h^{-1}\v_0\}.$$
Moreover, if $\mathbf f_h\wconv\mathbf f$ in $w^*-L^\infty(\R^+;\R^N)$ as $h\to+\infty$,  by setting  $\overline\yy_h(t):=\overline\u_h(ht)$ we have $\overline\yy_h\wconv \overline\yy$ in $w^*-W^{2,\infty}((0,T); \R^N)$ for every $T > 0$,
where  $\overline\yy$ is the unique solution on $\R^+$ of problem \eqref{newton0}. 
\end{theorem}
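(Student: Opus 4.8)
The plan is to treat each $h$ separately by a direct convexity argument, to read off an \emph{explicit} formula for the acceleration of the minimizer from its first variation, and then to pass to the limit $h\to+\infty$ directly in that formula.

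\emph{Existence and uniqueness.} I would parametrize the admissible class by the second derivative: writing $\mathbf w=\u''$, every $\mathbf w\in L^2(\R^+,e^{-t}\,dt;\R^N)$ determines an admissible trajectory through
\[
\u(t)=\u_0+h^{-1}\v_0\,t+\int_0^t(t-s)\,\mathbf w(s)\,ds,
\]
which meets both initial conditions automatically, so the constrained problem turns into an unconstrained minimization over a Hilbert space. In these variables $\mathcal J_h(\u)=\tfrac m2\|\mathbf w\|^2_{L^2_{e^{-t}}}-\ell_h(\mathbf w)$ with $\ell_h$ affine; using $\int_0^t(t-s)^2e^{s}\,ds\le 2e^{t}$ and Cauchy--Schwarz one bounds $\int_0^{+\infty}|\u(t)|e^{-t}\,dt\le C+C\|\mathbf w\|$ (recall $\|\mathbf f_h\|_{L^\infty}\le C$ since $\mathbf f_h\wconv\mathbf f$), so $\ell_h$ is continuous. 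Strict convexity of the quadratic term, coercivity and weak lower semicontinuity then give a unique minimizer $\overline\u_h$ by the direct method.

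\emph{Euler--Lagrange and the explicit formula.} Working in the variable $\mathbf w$ avoids all boundary terms at infinity: setting the first variation to zero and moving the kernel $(t-s)$ onto the force by Fubini yields the pointwise identity $m\,\overline\u_h''(s)\,e^{-s}=h^{-2}\int_s^{+\infty}(t-s)\mathbf f_h(h^{-1}t)e^{-t}\,dt$, i.e.\ after $\tau=t-s$,
\[
\overline\u_h''(s)=\frac1{mh^2}\int_0^{+\infty}\tau\,e^{-\tau}\,\mathbf f_h\!\big(h^{-1}(s+\tau)\big)\,d\tau .
\]
Undoing the scaling, for which $\overline\yy_h''(t)=h^2\overline\u_h''(ht)$, this becomes the clean representation
\[
\overline\yy_h''(t)=\frac1m\int_0^{+\infty}\tau\,e^{-\tau}\,\mathbf f_h\!\big(t+\tau/h\big)\,d\tau .
\]
Since $\int_0^{+\infty}\tau e^{-\tau}\,d\tau=1$, it gives at once $\|\overline\yy_h''\|_{L^\infty(\R^+)}\le \|\mathbf f_h\|_{L^\infty}/m\le C$; together with $\overline\yy_h(0)=\u_0$ and $\overline\yy_h'(0)=h\,\overline\u_h'(0)=\v_0$ this bounds $\overline\yy_h$ in $W^{2,\infty}(0,T)$ uniformly in $h$, for every $T>0$.

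\emph{Passage to the limit.} By weak-$*$ compactness and a diagonal argument in $T$, a subsequence of $\overline\yy_h$ converges weak-$*$ in $W^{2,\infty}(0,T)$ to some $\overline\yy$; the uniform bound forces $C^1_{loc}$ convergence, so $\overline\yy(0)=\u_0$ and $\overline\yy'(0)=\v_0$ survive. To identify the equation I would test the representation of $\overline\yy_h''$ against an arbitrary $\boldsymbol\phi\in L^1(0,T;\R^N)$ and use Fubini, reducing matters to the inner integral $\int_0^T\mathbf f_h(t+\tau/h)\cdot\boldsymbol\phi(t)\,dt$ for fixed $\tau$. This is the main obstacle: one must reconcile the weak-$*$ convergence $\mathbf f_h\wconv\mathbf f$ with the vanishing translation $\tau/h\to0$ \emph{at the same time}. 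Splitting off the shift and invoking continuity of translations in $L^1$ (uniformly controlled by $\|\mathbf f_h\|_{L^\infty}\le C$) shows this inner integral tends to $\int_0^T\mathbf f\cdot\boldsymbol\phi\,dt$ for each $\tau$, whereupon dominated convergence in $\tau$ with majorant $C\|\boldsymbol\phi\|_{L^1}\tau e^{-\tau}$ gives $\overline\yy_h''\wconv \mathbf f/m$ weak-$*$ in $L^\infty(0,T)$. Hence $m\overline\yy''=\mathbf f$ with the prescribed data, and uniqueness of the solution of \eqref{newton0} promotes the subsequential convergence to convergence of the full sequence.
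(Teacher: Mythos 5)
Your proposal is correct, and it follows a genuinely different route from the paper. The paper proves existence by the direct method (minimizing sequences, the a priori estimates of its Lemma 2.1, weak compactness and lower semicontinuity), obtains the uniform bound $\|\overline\yy_h''\|_\infty\le m^{-1}\|\mathbf f_h\|_\infty$ by a duality argument (testing the first variation against $\ppsi_h$ built from arbitrary $\pphi\in C_c$ and using density of $C_c$ in $L^1$), and passes to the limit in the weak formulation with test functions $\pphi_h(t)=\xxi(t)e^{ht}$, so that the limit equation appears in distributional form. You instead exploit linearity to the hilt: parametrizing admissible trajectories by $\mathbf w=\u''\in L^2(e^{-t}dt)$ reduces the constrained problem to an unconstrained strictly convex quadratic one on a Hilbert space (your bound $\int_0^t(t-s)^2e^s\,ds\le 2e^t$ correctly makes the linear term continuous), and the Euler--Lagrange equation then yields the closed-form kernel representation $\overline\yy_h''(t)=m^{-1}\int_0^{+\infty}\tau e^{-\tau}\mathbf f_h(t+\tau/h)\,d\tau$, from which the key $L^\infty$ bound is immediate since $\int_0^{+\infty}\tau e^{-\tau}d\tau=1$. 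Your limit passage is also different and its delicate point --- reconciling $\mathbf f_h\wconv\mathbf f$ with the vanishing shift $\tau/h$ --- is handled correctly: splitting off the translation, bounding it by $\|\mathbf f_h\|_\infty\,\|\tilde{\boldsymbol\phi}(\cdot-\tau/h)-\tilde{\boldsymbol\phi}\|_{L^1}$ (with $\tilde{\boldsymbol\phi}$ the zero extension) and invoking continuity of translations in $L^1$ plus dominated convergence in $\tau$ is exactly what is needed; note that $\sup_h\|\mathbf f_h\|_\infty<\infty$ is guaranteed by uniform boundedness of weak-$*$ convergent sequences, which the paper also uses implicitly. The trade-off: your argument is shorter and yields extra information (the approximate acceleration is an exponentially weighted forward average of the force), but it relies entirely on the linear-quadratic structure, whereas the paper's direct-method-plus-duality scheme is the one that generalizes to the nonlinear and hyperbolic settings the authors announce.
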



A variational approach based on the minimization of \textit{weighted inertia-energy} (WIE) functionals can be used for approximating large classes of initial value problems of the second order.
An example is the nonhomogeneous wave equation $$w_{tt}=\Delta w +g \qquad\mbox{in }  \R^+\times\R^N.$$ Indeed, it has been shown in \cite{TT1} that given $g\in L^2_{loc}((0,+\infty);L^2(\mathbb R^N))$, $\alpha\in H^1(\R^N)$, $\beta\in H^1(\R^N)$, there exists a sequence $(g_h)_{h\in\mathbb N}$ converging  to $g$ in $L^2((0,T);L^2(\mathbb R^N))$ for every $T>0$ such that the following properties hold. First, the WIE functional 
\[
\int_0^{+\infty}\int_{\R^N}e^{-t}\left\{\frac12|u_{tt}(t,x)|^2+\frac12h^{-2}|\nabla u(t,x)|^2-h^{-2}g_h(h^{-1}t,x)u(t,x)\right\}\,dt\,dx
\]
has, for every $h\in\mathbb N$, a unique minimizer $u_h$
in the class of functions $u\in L^{1}_{loc}(\R^+\times\R^N)$ such that 
\[ \left\{\begin{array}{ll} &\nabla u\in L^{1}_{loc}(\R^+\times\R^N),\qquad u''\in L^{1}_{loc}(\R^+\times\R^N),\\
&\\
&\displaystyle \int_0^{+\infty}\!\!\int_{\R^N}e^{-t}\left\{|u_{tt}|^2+|\nabla u|^2\right\}\,dt\,dx < +\infty,\\
&\\
& u(0,x)=\alpha (x),\qquad u_{t}(0,x)=h^{-1}\beta (x).\\
\end{array}\right.
\]
Second,
 by setting $ w_h(t,x):=u_h(ht,x)$,
the sequence $(w_h)_{h\in\mathbb N}$ converges
 weakly in $H^1((0,T)\times \R^N)$ for every $T>0$ to a function $w$ which solves in the sense of distributions in $\R^+\times \R^N$ the initial value problem
 \beeq\label{nonhomogeneous} \left\{\begin{array}{ll} &w_{tt}= \Delta w+g\\
 &\\
 & w(0,x)=\alpha (x),\qquad w_{t}(0,x)=\beta (x).
 \end{array}\right.
 \eneq
A similar result holds true for other classes of hyperbolic equations as shown in \cite{ST2, TT2, LS2}. In particular it applies to the nonlinear wave equation $w_{tt}=\Delta w-\tfrac p2|w|^{p-2}w$, $p\ge 2$, as  conjectured by De Giorgi \cite{DG} and first proven in \cite{ST}, see also \cite{S}.
Let us mention that (the scalar version of) Theorem \ref{main} is not a direct consequence of the above result from \cite{TT1}, since one should apply the latter to constant-in-space forcing terms $g$ and initial data $\alpha,\beta$, and since the approximating sequence $(g_h)_{h\in\mathbb N}$ in \cite{TT1} is not arbitrary but  obtained by means of a specific construction, not allowing for instance for the choice $g_h\equiv g$ for every $h$.

Concerning the WIE approach for odes, let us mention its application in \cite{LS} for providing  
a variational approach to Lagrangian mechanics, by considering an equation of the form  \beeq\label{potentialenergy} m \mathbf y''+\nabla U(\mathbf y)=0,\qquad t>0\eneq for given potential energy $U\in C^1(\mathbb R^N)$, bounded from below, and $m>0$. The main theorem of \cite{LS} proves indeed that solutions to the initial value problem for \eqref{potentialenergy} can be approximated by rescaled minimizers, subject to the same initial conditions, of the functionals
\[
\mathcal G_h(\v)=\int_0^{+\infty}e^{-t}\left\{\frac m2|\mathbf v''(t)|^2+h^{-2} U(\mathbf v(t))\right\}\,dt,\qquad h\in\mathbb N.
\]
{It is worth noticing that, also in this case, Theorem \ref{main} is not a  consequence of the result from \cite{LS} since 
the latter requires that the force field is conservative  and independent of $t$.}

We have already observed that in the scalar case problem \eqref{newton0} is a particular case of problem \eqref{nonhomogeneous}, obtained by taking constant initial data and letting the forcing term depend only on time. Let us also mention 
another interpretation of \eqref{newton0}
from a continuum mechanics point of view. Indeed,  Newton's second law \eqref{newton0} governs the motion of the center of mass of a body occupying a reference configuration $\Omega\subset\R^N$. More in detail, let 
$\rho$ be the mass density of the body and let  
$\u(t,\x)$ be the position of the material point $\x$ at time  $t$. If $\mathbb T$ is the Cauchy stress  tensor and  $\mathbf b$ is the body force field acting on $\om$, then the equation of motion,  see for instance \cite{G},
 takes the form
\beeq\lab{motion}\rho \u_{tt}= \dv \mathbb T+ \mathbf b\qquad\mbox{ in $\R^+\times\Omega.$}\eneq
Therefore,   by integrating in $\om$ both sides of \eqref{motion}, we formally get
\[\displaystyle \frac{d^2}{dt^2}\left (\int_{\om}\rho \u\, d\x\right )= \int_{\om} \dv \mathbb T\, d\x+ \int_{\om}\mathbf b\,d\x=
\int_{\partial\om} \mathbb T\cdot{\mathbf n}\, d\mathcal H^{N-1}+ \int_{\om}\mathbf b\,d\x=: \mathbf f_\om,\qquad t>0,\]
that is, 
\[ m_\om\,  \y''= \mathbf f_\om,\qquad t>0,\]
where $ \mathbf f_\om=\mathbf f_\om(t)$ is the total force acting on the body, accounting for surface and body forces, 
$  m_\om=\int_{\om}\rho(\x)\,d\x$
is the mass of the body and
$$\y(t)=m_\om^{-1}\int_{\om}\rho(\x)\, \u(t,\x)\,d\x $$
is the position at time t of the center of mass of the body during the motion. Therefore Newton's second law \eqref{newton0} can be viewed as the average in space 
of the equation of motion \eqref{motion}. 
In this perspective Theorem \ref{main}  can be seen as a result about the  equation of motion in $\R^N$ in the above average sense.

Let us finally stress that the methods that are described in this paper, here  only devoted to the elementary problem \eqref{newton0}, can be extended to nonlinear problems like $\mathbf y''=\nabla_{\mathbf y} G(t,\mathbf y)$ under suitable assumptions on $G$, but also to hyperbolic problems such as \eqref{nonhomogeneous} allowing to get further results on these topics. In this perspective, we will develop our analysis in a forthcoming paper.

\KKK

 \section{Existence of minimizers}
 In this section we provide some preliminary results that we are going to use for proving  Theorem \ref{main}.
First of all, it is worth noticing that if $\u\in \mathcal A$ then $\u\in W^{2,2}((0,T);\R^N)$ for every $T > 0$ hence both $\u(0)$ and $\u'(0)$ are well defined.  Moreover, if $\u\in \mathcal A$, by Cauchy-Schwarz inequality
\[
\left|\int_0^{+\infty}\mathbf f(h^{-1}t)\cdot\u(t)e^{-t}\,dt\right|\le\|\mathbf f\|_{\infty}\left(\int_0^{+\infty}|\mathbf u(t)|^2e^{-t}\,dt\right)^{1/2}
\] 
and the integral in the left hand side is finite (see Lemma \ref{newlemma} below),
so that $\mathcal J_h(\u)$ is well-defined and finite.  \KKK
In fact, we have the following estimates

\begin{lemma} \label{newlemma}
 Let $\u\in\mathcal A$. Then $e^{-t/2}\u\in L^2((0,+\infty);\R^N)$, $e^{-t/2}\u'\in L^2((0,+\infty);\R^N)$ and
\begin{equation}\label{thefirst}
\int_0^{+\infty}|\u'(t)|^2e^{-t}\,dt\le 2|\u'(0)|^2+4\int_0^{+\infty}|\u''(t)|^2e^{-t}\,dt,
\end{equation}
\begin{equation}\label{thesecond}
\int_0^{+\infty}|\u(t)|^2e^{-t}\,dt\le 2|\u(0)|^2+8|\u'(0)|^2+16\int_0^{+\infty}|\u''(t)|^2e^{-t}\,dt.
\end{equation}
\end{lemma}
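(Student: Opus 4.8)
The plan is to reduce both inequalities to a single elementary weighted Poincar\'e-type estimate: for every $\mathbf g\in W^{1,1}_{loc}(\R^+;\R^N)$ with $\int_0^{+\infty}|\mathbf g'(t)|^2e^{-t}\,dt<+\infty$, one has
\[
\int_0^{+\infty}|\mathbf g(t)|^2e^{-t}\,dt\le 2|\mathbf g(0)|^2+4\int_0^{+\infty}|\mathbf g'(t)|^2e^{-t}\,dt.
\]
Granting this, the first inequality \eqref{thefirst} follows at once by taking $\mathbf g=\u'$ (so that $\mathbf g'=\u''$, and $\int_0^{+\infty}|\u''|^2e^{-t}\,dt<+\infty$ precisely because $\u\in\mathcal A$). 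The second inequality \eqref{thesecond} follows by taking $\mathbf g=\u$ and then substituting \eqref{thefirst} into the resulting bound, since $2|\u(0)|^2+4\bigl(2|\u'(0)|^2+4\int_0^{+\infty}|\u''|^2e^{-t}\,dt\bigr)$ is exactly the right-hand side of \eqref{thesecond}. The two integrability statements $e^{-t/2}\u\in L^2$ and $e^{-t/2}\u'\in L^2$ are then just a restatement of the finiteness of these right-hand sides.

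To prove the key estimate I would work first on a finite interval $[0,S]$, so as to avoid any issue with the behaviour at $+\infty$. Writing $|\mathbf g|^2=\mathbf g\cdot\mathbf g$ and integrating by parts against $e^{-t}=-\tfrac{d}{dt}e^{-t}$ (legitimate, as $\mathbf g$ is absolutely continuous on $[0,S]$), I obtain
\[
\int_0^{S}|\mathbf g|^2e^{-t}\,dt=|\mathbf g(0)|^2-|\mathbf g(S)|^2e^{-S}+2\int_0^{S}(\mathbf g\cdot\mathbf g')\,e^{-t}\,dt.
\]
Since $-|\mathbf g(S)|^2e^{-S}\le 0$ it may be discarded. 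Setting $A_S:=\int_0^{S}|\mathbf g|^2e^{-t}\,dt$ (finite, as $\mathbf g$ is continuous on $[0,S]$) and $B:=\int_0^{+\infty}|\mathbf g'|^2e^{-t}\,dt$, Cauchy-Schwarz gives $A_S\le|\mathbf g(0)|^2+2\sqrt{A_S}\,\sqrt{B}$, and Young's inequality $2\sqrt{A_S}\,\sqrt{B}\le\tfrac12 A_S+2B$ yields the $S$-independent bound $A_S\le 2|\mathbf g(0)|^2+4B$. Letting $S\to+\infty$ by monotone convergence produces the claimed inequality, and in particular shows $\int_0^{+\infty}|\mathbf g|^2e^{-t}\,dt<+\infty$.

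The only delicate point is the one just circumvented: a priori one does not know that $\int_0^{+\infty}|\mathbf g|^2e^{-t}\,dt$ is finite, so one can neither integrate by parts on all of $\R^+$ directly nor assume the boundary term at $+\infty$ vanishes. Truncating at $S$ and discarding the nonpositive contribution $-|\mathbf g(S)|^2e^{-S}$ sidesteps both difficulties and is what makes the self-improving estimate $A_S\le 2|\mathbf g(0)|^2+4B$ closed and uniform in $S$; monotone convergence then upgrades it to the full statement. I expect this truncation/absorption step to be the main, though modest, obstacle, everything else being a direct application of Cauchy-Schwarz and Young's inequality.
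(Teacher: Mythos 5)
Your proof is correct and takes essentially the same route as the paper: integrate by parts on a finite interval, discard the nonpositive boundary term, absorb half of the left-hand side via Young's inequality, and let the endpoint tend to infinity; packaging this as a single weighted Poincar\'e-type lemma applied first to $\u'$ and then to $\u$ is only a cosmetic reorganization of the paper's two successive computations, and your constants match the statement exactly.
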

\begin{proof}
 We have $\u\in AC([0,T];\mathbb R^N)$ and $\u'\in AC([0,T];\R^N)$ for every $T>0$.  Therefore $\frac {d}{dt}|\u(t)|^2=2\u(t)\cdot\u'(t)$ and $\frac {d}{dt}|\u'(t)|^2=2\u'(t)\cdot\u''(t)$ for a.e. $t>0$. Moreover, given $T>0$ we  integrate by parts and obtain
\[\begin{aligned}
\int_0^{T}|\u'(t)|^2e^{-t}\,dt&=\left[-e^{-t}|\u'(t)|^2\right]_0^T+2\int_0^Te^{-t/2}\u'(t)\cdot\u''(t)e^{-t/2}\,dt\\&\le |\u'(0)|^2+\frac12\int_0^T|\u'(t)|^2e^{-t}\,dt+2\int_0^T|\u''(t)|^2e^{-t}\,dt,
\end{aligned}\]
where we have used Young inequality.
By letting $T\to+\infty$ we get \eqref{thefirst}. The same computation entails
\[
\int_0^{T}|\u(t)|^2e^{-t}\,dt\le |\u(0)|^2+\frac12\int_0^T|\u(t)|^2e^{-t}\,dt+2\int_0^T|\u'(t)|^2e^{-t}\,dt.
\]
By letting $T\to+\infty$ and by taking advantage of \eqref{thefirst} we obtain \eqref{thesecond}. 
\end{proof}

\KKK

The next lemma proves the first statement of Theorem \ref{main}.
\begin{lemma}
For every $h\in\mathbb N$ there exists a unique solution to the problem
\begin{equation}\label{pb1}
\min \{\mathcal J_h(\u): \u\in\mathcal A,\, \u(0)=\u_0, \u'(0)=h^{-1}\v_0\}
\end{equation}
\end{lemma}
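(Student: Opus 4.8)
The plan is to apply the direct method of the calculus of variations, exploiting that $\mathcal J_h$ is the sum of the convex quadratic functional $Q(\u):=\frac m2\int_0^{+\infty}|\u''(t)|^2e^{-t}\,dt$ and a term that is affine in $\u$, while the constraint set $\{\u\in\mathcal A:\ \u(0)=\u_0,\ \u'(0)=h^{-1}\v_0\}$ is convex. First I would establish coercivity with respect to the seminorm $[\u]:=\big(\int_0^{+\infty}|\u''(t)|^2e^{-t}\,dt\big)^{1/2}$. By Cauchy--Schwarz the linear term is bounded by $h^{-2}\|\mathbf f_h\|_\infty\big(\int_0^{+\infty}|\u(t)|^2e^{-t}\,dt\big)^{1/2}$, and by \eqref{thesecond} in Lemma \ref{newlemma} the last integral is controlled by $2|\u_0|^2+8h^{-2}|\v_0|^2+16[\u]^2$ on the constraint set; hence $\mathcal J_h(\u)\ge \frac m2[\u]^2-C(1+[\u])$ there, which forces $[\u_n]$ to stay bounded along any minimizing sequence $(\u_n)$.

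Next I would extract a limit. Since $e^{-t}$ is bounded below on each interval $[0,T]$, boundedness of $[\u_n]$ yields a bound for $\|\u_n''\|_{L^2(0,T)}$, and together with the fixed data $\u_n(0)=\u_0$, $\u_n'(0)=h^{-1}\v_0$ this gives a bound for $\u_n$ in $W^{2,2}((0,T);\R^N)$ for every $T>0$. A diagonal argument then produces a subsequence and a limit $\overline\u$ with $\u_n\wconv\overline\u$ in $W^{2,2}((0,T);\R^N)$ for each $T$; the compact embedding $W^{2,2}(0,T)\hookrightarrow C^1([0,T])$ ensures $\overline\u(0)=\u_0$ and $\overline\u'(0)=h^{-1}\v_0$, so the constraints pass to the limit. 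By weak lower semicontinuity of $Q$ (convex and continuous in $\u''$) one has $Q(\overline\u)\le\liminf Q(\u_n)$, which in particular gives $\overline\u\in\mathcal A$. For the linear term, the sequence $e^{-t/2}\u_n$ is bounded in $L^2((0,+\infty);\R^N)$ and converges to $e^{-t/2}\overline\u$ on every $[0,T]$, hence $e^{-t/2}\u_n\wconv e^{-t/2}\overline\u$ in $L^2$; since $e^{-t/2}\mathbf f_h(h^{-1}\cdot)\in L^2$, the linear term converges. Combining, $\mathcal J_h(\overline\u)\le\liminf\mathcal J_h(\u_n)$, so $\overline\u$ is a minimizer. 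The main obstacle here is precisely this passage to the limit on the unbounded interval: the exponential weight destroys global Sobolev compactness, so one cannot compactify $(0,+\infty)$ directly, and the localize--and--diagonalize scheme together with the a priori estimates of Lemma \ref{newlemma} (which transfer control of $\u''$ in the weighted norm to control of $\u$ and $\u'$) is what makes the argument work.

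Finally, uniqueness follows from strict convexity in the $\u''$ variable. If $\u_1,\u_2$ are both minimizers, the parallelogram identity gives
\[
\mathcal J_h\Big(\tfrac{\u_1+\u_2}{2}\Big)=\tfrac12\mathcal J_h(\u_1)+\tfrac12\mathcal J_h(\u_2)-\tfrac m8\int_0^{+\infty}|\u_1''(t)-\u_2''(t)|^2e^{-t}\,dt,
\]
and since the midpoint is admissible its energy cannot lie below the minimum; therefore $\int_0^{+\infty}|\u_1''-\u_2''|^2e^{-t}\,dt=0$, i.e. $\u_1''=\u_2''$ a.e. As $\u_1$ and $\u_2$ share the same initial position and velocity, integrating twice yields $\u_1=\u_2$, which completes the proof.
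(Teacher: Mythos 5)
Your proposal is correct and follows essentially the same route as the paper: the direct method with coercivity supplied by estimate \eqref{thesecond} of Lemma \ref{newlemma}, local weak $W^{2,2}$ compactness with initial conditions passing to the limit, lower semicontinuity of the quadratic term and weak $L^2$ convergence of the weighted sequence for the linear term, and uniqueness from strict convexity in $\u''$ combined with the shared initial data. The only differences are cosmetic (a diagonalization over $T$ instead of global weak convergence of $e^{-t/2}\u_k$ in $W^{2,2}(\R^+;\R^N)$, and an explicit parallelogram identity where the paper simply invokes strict convexity).
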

\begin{proof}
We first observe  that $\mathcal J_h$ is strictly convex and that the minimization set is convex. Therefore if a minimizer exists it is necessarily unique, so we are left to prove existence.
If $\u\in\mathcal A$ is such that $\u(0)=\u_0,\ \u' (0)=h^{-1}\v_0$, Lemma \ref{newlemma} entails
\beeq\lab{est1}
\int_0^{+\infty}|\u(t)|^2e^{-t}\,dt\le 2|\u_0|^2+8h^{-2}|\v_0|^2+ 16\int_0^{+\infty}|\u''(t)|^2e^{-t}\,dt
\eneq and
\beeq\lab{est1bis}
\int_0^{+\infty}|\u'(t)|^2e^{-t}\,dt\le 2h^{-2}|\v_0|^2+ 4\int_0^{+\infty}|\u''(t)|^2e^{-t}\,dt.
\eneq
Let $(\u_k)_{k\in\mathbb N}$ be a minimizing sequence for problem \eqref{pb1}. Since $\u_0+h^{-1}t\v_0$ is admissible for problem \eqref{pb1},  we have for any large enough $k$
\[
\mathcal J_h(\u_k)\le \mathcal J_h(\u_0+h^{-1}t\v_0)+1,\]
whence by \eqref{est1}, by Young and Cauchy-Schwarz inequalities, and by denoting with $C$ various constants only depending on $\|\mathbf f_h\|_{\infty}, h, \u_0, \v_0$, $m$, we get
\beeq\lab{est3}\begin{aligned}
&\displaystyle\int_0^{+\infty}|\u_k''(t)|^2e^{-t}\,dt\le \frac{2}{m}h^{-2}\int_0^{+\infty}\mathbf f_h(h^{-1}t)\cdot\u_k(t)e^{-t}\,dt\\&\qquad\qquad\qquad\qquad\qquad-
\frac2 m h^{-2}\int_0^{+\infty}\mathbf f_h(h^{-1}t)\cdot (\u_0+h^{-1}t\v_0)e^{-t}\,dt+ \frac2m\\
&\quad\displaystyle\le \frac{2}{m}\|\mathbf f_h\|_{\infty}h^{-2}\int_0^{+\infty}|\u_k(t)|e^{-t}\,dt+C\le \frac{2}{m}\|\mathbf f_h\|_{\infty}h^{-2}\left (\int_0^{+\infty}|\u_k(t)|^2 e^{-t}\,dt\right )^{\frac{1}{2}}+C\\
&\quad\displaystyle \le \frac{1}{32}\int_0^{+\infty}|\u_k(t)|^2 e^{-t}\,dt + \frac{32}{m^2}h^{-4}\|\mathbf f_h\|^2_{\infty}+C
\le \frac{1}{2}\int_0^{+\infty}|\u_k''(t)|^2e^{-t}\,dt + C.
\end{aligned}
\eneq
By taking into account of \eqref{est1}, \eqref{est1bis}, \eqref{est3} we get that the sequence $(e^{-\frac{t}{2}}\u_k)_{k\in\mathbb N}$ is equibounded in 
$W^{2,2}(\R^+;\R^N)$, so there exists $\v\in W^{2,2}(\R^+;\R^N)$ such that up to extracting a subsequence there holds $e^{-\frac{t}{2}}\u_k\wconv\v$ in $W^{2,2}(\R^+;\R^N)$, hence $\u_k\wconv \u:=e^{\frac{t}{2}}\v$
in $W^{2,2}((0,T);\R^N)$ for every $T > 0$ and  $\u(0)=\u_0$, $\u'(0)=h^{-1}\v_0$. \KKK Therefore we have for every $T > 0$
\[
\displaystyle\liminf_{k\to +\infty}\int_0^{+\infty}|\u_k''(t)|^2e^{-t}\,dt \ge \liminf_{k\to +\infty}\int_0^{T}|\u_k''(t)|^2e^{-t}\,dt\ge \int_0^{T}|\u''(t)|^2e^{-t}\,dt,
\]
hence
\[
\displaystyle\int_0^{+\infty}|\u''(t)|^2e^{-t}\,dt= \sup_{T> 0}\int_0^{T}|\u''(t)|^2e^{-t}\,dt\le \liminf_{k\to +\infty}\int_0^{+\infty}|\u_k''(t)|^2e^{-t}\,dt,
\]
so eventually we find $\u\in \mathcal A$,
and since
\[\begin{aligned}
\displaystyle\lim_{k\to +\infty}\int_0^{+\infty}h^{-2}\mathbf f_h(h^{-1}t)\cdot \u_k \,e^{-t}\,dt&=
 \int_0^{+\infty}h^{-2}\mathbf f_h(h^{-1}t)\cdot \v \,e^{-t/2}\,dt\\&=\int_0^{+\infty}h^{-2}\mathbf f_h(h^{-1}t)\cdot \u\, e^{-t}\,dt
\end{aligned}
\]
 we get
\[
\liminf_{k\to +\infty}\mathcal J_h(\u_k)\ge \mathcal J_h(\u).
\]
We conclude that $\u$ is solution to \eqref{pb1}.
\end{proof}
\begin{lemma}\lab{scaling} Let $h\in\mathbb N$. If $\overline\u_h$ is the unique solution to \eqref{pb1}, then $\overline\y_h(t):=\overline\u_h(ht)$
is the unique minimizer of
\[
\mathcal F_h(\y):=\left\{\begin{array}{ll}&\displaystyle\frac{m}{2h^2}\int_0^{+\infty}|\y''(t)|^2\,e^{-ht}\,dt-
\int_0^{+\infty}\mathbf f_h(t)\cdot\y(t)\,e^{-ht}\,dt\quad \text{if}\ \y \in\mathcal A_h\\
&\\
& +\infty\quad \text{otherwise in}\ W^{2,1}_{loc}(\R^+;\R^N)
\end{array}\right.
\]
over $\mathcal A_h$, where
\[ \mathcal A_h:=\left\{\y\in W^{2,1}_{loc}(\R^+;\R^N): \ \int_0^{+\infty}|\y''(t)|^2e^{-ht}\,dt < +\infty,\ \y(0)=\u_0,\
\y'(0)=\v_0\right\}.
\]
\end{lemma}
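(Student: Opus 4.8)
The strategy is to show that the elementary time rescaling $\u\mapsto\y$ given by $\y(t):=\u(ht)$ establishes a bijection between the admissible class of problem \eqref{pb1}, namely $\{\u\in\mathcal A:\ \u(0)=\u_0,\ \u'(0)=h^{-1}\v_0\}$, and the class $\mathcal A_h$, and that under this correspondence the two functionals differ only by the fixed positive multiplicative factor $h$. Since multiplication by a positive constant does not change the set of minimizers, the unique minimizer of $\mathcal J_h$ supplied by the previous lemma is transported to the unique minimizer of $\mathcal F_h$.

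First I would verify that the rescaling respects both the function spaces and the initial data. If $\u\in\mathcal A$, then $\y(t)=\u(ht)$ lies in $W^{2,1}_{loc}(\R^+;\R^N)$ with $\y'(t)=h\,\u'(ht)$ and $\y''(t)=h^2\,\u''(ht)$, while conversely $\u(t)=\y(t/h)$ recovers $\u$, so the map is a bijection at the level of $W^{2,1}_{loc}$. The substitution $s=ht$ yields $\int_0^{+\infty}|\y''(t)|^2e^{-ht}\,dt=h^{3}\int_0^{+\infty}|\u''(s)|^2e^{-s}\,ds$, so the finiteness condition defining $\mathcal A_h$ holds for $\y$ exactly when the one defining $\mathcal A$ holds for $\u$; moreover $\y(0)=\u(0)$ and $\y'(0)=h\,\u'(0)$, so the constraints $\u(0)=\u_0$, $\u'(0)=h^{-1}\v_0$ are equivalent to $\y(0)=\u_0$, $\y'(0)=\v_0$. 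Thus $\u\mapsto\y$ maps the admissible class of \eqref{pb1} bijectively onto $\mathcal A_h$, and it carries the region where $\mathcal J_h=+\infty$ onto the region where $\mathcal F_h=+\infty$.

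Next I would apply the same change of variables $s=ht$ to both terms of $\mathcal F_h(\y)$. The inertial term becomes $\tfrac{m}{2h^2}\,h^{3}\int_0^{+\infty}|\u''(s)|^2e^{-s}\,ds=\tfrac{mh}{2}\int_0^{+\infty}|\u''(s)|^2e^{-s}\,ds$, and the forcing term becomes $h^{-1}\int_0^{+\infty}\mathbf f_h(h^{-1}s)\cdot\u(s)\,e^{-s}\,ds$. Comparing with the definition of $\mathcal J_h$ gives the identity $\mathcal F_h(\y)=h\,\mathcal J_h(\u)$ for every admissible $\u$. Since $h>0$ is fixed, it follows that $\y$ minimizes $\mathcal F_h$ over $\mathcal A_h$ if and only if the corresponding $\u$ minimizes $\mathcal J_h$ over the admissible class of \eqref{pb1}, with uniqueness transported as well; invoking the previous lemma, $\overline\u_h$ is the unique minimizer of $\mathcal J_h$, so $\overline\y_h(t)=\overline\u_h(ht)$ is the unique minimizer of $\mathcal F_h$ over $\mathcal A_h$.

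The computation is entirely elementary, and I do not anticipate any genuine analytical obstacle. The only point that deserves a little care is the bookkeeping of the powers of $h$ in the change of variables, together with the matching transformation of the initial velocity $\y'(0)=h\,\u'(0)$ -- this is precisely what dictates the rescaled constraint $\u'(0)=h^{-1}\v_0$ appearing in \eqref{pb1}.
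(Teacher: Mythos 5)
Your proposal is correct and follows essentially the same route as the paper: both establish that the rescaling $\u\mapsto\y(\cdot)=\u(h\,\cdot)$ is a bijection between the admissible class of \eqref{pb1} and $\mathcal A_h$, and that $\mathcal F_h(\y)=h\,\mathcal J_h(\u)$ under this correspondence, so minimizers and uniqueness are transported. The power-of-$h$ bookkeeping in your change of variables checks out.
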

\begin{proof}
 Since $\overline \u_h\in\mathcal A$ and $\overline\u_h(0)=\u_0$, $\overline \u_h'(0)=h^{-1}\v_0$, we directly see that $\overline \y_h\in\mathcal A_h$ and that $ h^{-1}\mathcal F_h(\overline\y_h)=\mathcal J_h(\overline\u_h)$. \KKK
 Moreover, if $\y\in \mathcal A_h$, by setting $\u_h(t)=\y(h^{-1}t)$ we get $\u_h\in \mathcal A,\ \u_h(0)=\u_0,\ \u_h'(0)=h^{-1}\v_0$ and  $h^{-1}\mathcal F_h(\y)=\mathcal J_h(\u_h)$. \KKK Therefore $\mathcal F_h(\overline\y_h)\le \mathcal F_h(\y)$ for every $\y\in \mathcal A_h$  and equality holds if and only if $\mathbf y=\overline{\mathbf y}_h$, \KKK as claimed.
\end{proof}

\section{Proof of Theorem \ref{main}}

Given $\overline{\yy}_h$ minimizing $\mathcal F_h$ over $\mathcal A_h$, here we prove suitable boundedness estimates for the sequence $(\overline{\mathbf y}_h)_{h\in\mathbb N}$, which is the main step towards the proof of Theorem \ref{main}. \KKK

\begin{lemma}\lab{equibound} For every $h\in\mathbb N$, let $\overline \y_h$ as in {\rm Lemma \ref{scaling}}. Then ${\overline \y}_h''\in L^{\infty}(\R^+;\R^N)$ and 
\[
\|{\overline\y}_h''\|_{\infty}\le m^{-1}\sup_{h\in\mathbb N}\|\mathbf f_h\|_{\infty}.
\]
Moreover, the sequence $(\overline \y_h)_{h\in\mathbb N}$ is equibounded in $W^{2,\infty}((0,T); \R^N)$ for every $T > 0$.
\end{lemma}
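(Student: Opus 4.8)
The plan is to characterize $\overline\y_h$ through its Euler--Lagrange equation and then read off the claimed pointwise bound from an explicit representation of $\overline\y_h''$. Since $\mathcal F_h$ is convex and $\overline\y_h$ minimizes it over the affine set $\mathcal A_h$, the first variation must vanish along every admissible perturbation. First I would take $\phi\in C_c^\infty((0,+\infty);\R^N)$, which satisfies $\phi(0)=\phi'(0)=0$ and has finite weighted norm, so that $\overline\y_h+\eps\phi\in\mathcal A_h$; differentiating $\mathcal F_h(\overline\y_h+\eps\phi)$ at $\eps=0$ yields the weak Euler--Lagrange identity
\[
\frac{m}{h^2}\int_0^{+\infty}\overline\y_h''\cdot\phi''\,e^{-ht}\,dt=\int_0^{+\infty}\mathbf f_h\cdot\phi\,e^{-ht}\,dt\qquad\text{for all }\phi\in C_c^\infty((0,+\infty);\R^N).
\]
Because the test functions are compactly supported, no boundary terms at $0$ or at $+\infty$ arise, and this is precisely the statement that the distribution $p:=\overline\y_h''\,e^{-ht}$ satisfies $\tfrac{m}{h^2}\,p''=\mathbf f_h\,e^{-ht}$ on $(0,+\infty)$.

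Next I would upgrade regularity and rewrite this as an ordinary differential equation for $z:=\overline\y_h''$. Since $\mathbf f_h\in L^\infty$, the right-hand side $\mathbf f_h e^{-ht}$ is bounded, so $p\in W^{2,\infty}_{loc}$ and, undoing the exponential weight, $z=p\,e^{ht}$ solves, for a.e. $t$,
\[
z''-2h\,z'+h^2 z=\frac{h^2}{m}\,\mathbf f_h,\qquad\text{that is}\qquad \Big(\tfrac{d}{dt}-h\Big)^2 z=\frac{h^2}{m}\,\mathbf f_h .
\]
The homogeneous solutions are $e^{ht}$ and $t\,e^{ht}$, both of which violate the integrability constraint $\int_0^{+\infty}|z|^2e^{-ht}\,dt<+\infty$ that is encoded in $\overline\y_h\in\mathcal A_h$. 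Hence this constraint selects a unique solution, which I would exhibit by inverting $(\tfrac{d}{dt}-h)$ twice ``from $+\infty$'':
\[
\overline\y_h''(t)=\frac{h^2}{m}\int_t^{+\infty}(\sigma-t)\,e^{h(t-\sigma)}\,\mathbf f_h(\sigma)\,d\sigma .
\]

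The pointwise bound then follows by estimating the kernel,
\[
|\overline\y_h''(t)|\le\frac{h^2}{m}\,\|\mathbf f_h\|_\infty\int_t^{+\infty}(\sigma-t)\,e^{h(t-\sigma)}\,d\sigma=\frac{h^2}{m}\,\|\mathbf f_h\|_\infty\int_0^{+\infty}u\,e^{-hu}\,du=\frac1m\,\|\mathbf f_h\|_\infty,
\]
where the factor $h^2$ cancels exactly against $\int_0^{+\infty}u\,e^{-hu}\,du=h^{-2}$; bounding $\|\mathbf f_h\|_\infty\le\sup_{h}\|\mathbf f_h\|_\infty$ gives the stated inequality, and in particular $\overline\y_h''\in L^\infty(\R^+;\R^N)$. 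For the equiboundedness in $W^{2,\infty}((0,T);\R^N)$ I would integrate $\overline\y_h''$ twice from the initial data $\overline\y_h(0)=\u_0$, $\overline\y_h'(0)=\v_0$ built into $\mathcal A_h$: on $[0,T]$ this yields $|\overline\y_h'(t)|\le|\v_0|+Mt$ and $|\overline\y_h(t)|\le|\u_0|+|\v_0|T+\tfrac12 MT^2$ with $M:=m^{-1}\sup_h\|\mathbf f_h\|_\infty$, and $M<+\infty$ because a weak$^*$ convergent sequence $(\mathbf f_h)$ in $L^\infty$ is norm bounded by the uniform boundedness principle.

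I expect the main obstacle to be the rigorous passage from the weak Euler--Lagrange identity to the selection of the decaying solution: one must justify that the weighted integrability defining $\mathcal A_h$ rules out the growing modes $e^{ht}$ and $t\,e^{ht}$ and forces exactly the representation above, equivalently that the two constants of integration produced by inverting $(\tfrac{d}{dt}-h)^2$ are pinned to $0$. Once this is settled, the cancellation of the $h^2$ factor, the sharp constant $1/m$, and the double integration for the \emph{moreover} part are routine.
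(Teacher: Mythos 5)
Your proposal is correct, but it reaches the $L^\infty$ bound by a genuinely different route than the paper. The paper's proof is a duality argument: it perturbs the minimizer by $\ppsi_h(t)=h^{-2}\xxi(ht)$ where $\xxi''=e^{t}\pphi$ with $\pphi\in C_c(\R^+;\R^N)$ arbitrary, proves by integration by parts the chain $\int_0^{+\infty}|\xxi|e^{-t}\,dt\le\int_0^{+\infty}|\xxi'|e^{-t}\,dt\le\int_0^{+\infty}|\xxi''|e^{-t}\,dt$, and deduces from the first-order condition the estimate $\left|\int_0^{+\infty}\overline\u_h''\cdot\pphi\,ds\right|\le m^{-1}h^{-2}\|\mathbf f_h\|_\infty\|\pphi\|_{L^1}$, whence the $L^\infty$ bound follows by density of $C_c$ in $L^1$ and $L^1$--$L^\infty$ duality, never solving the equation. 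You instead localize the Euler--Lagrange identity to compactly supported test functions, recognize the resulting distributional ODE $(\tfrac{d}{dt}-h)^2\overline\y_h''=\tfrac{h^2}{m}\mathbf f_h$, and exhibit the unique solution compatible with the weighted integrability in $\mathcal A_h$ via the decaying kernel $(\sigma-t)e^{h(t-\sigma)}$, reading the sharp constant off $\int_0^{+\infty}u e^{-hu}\,du=h^{-2}$. Your computations check out: the particular solution does satisfy the equation, the homogeneous modes $e^{ht}$ and $te^{ht}$ are indeed excluded by $\int_0^{+\infty}|\overline\y_h''|^2e^{-ht}\,dt<+\infty$ (since the particular solution is bounded, any nonzero homogeneous contribution would make the integral diverge), and the passage from the distributional identity to the representation formula only uses the standard fact that a distribution whose second derivative is in $L^\infty_{loc}$ lies in $W^{2,\infty}_{loc}$. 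The trade-off is that your argument yields more in this specific problem --- an explicit formula for $\overline\y_h''$ from which one could even read off the limit behaviour directly --- but it leans on the constant-coefficient, one-dimensional ODE structure, whereas the paper's duality argument requires no solution formula and is the one that survives in the PDE and nonlinear generalizations the authors advertise. The \emph{moreover} part is handled identically in both proofs.
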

\begin{proof} Let $h\in\mathbb N$, $\pphi\in C_c(\R^+;\R^N)$ and let $\xxi$ be the unique solution to 
\[\left\{\begin{array}{ll}& \xxi''=e^{t}\pphi,\qquad t>0,\\
&\\
& \xxi (0)=\xxi' (0)=0.\\
\end{array}
\right.
\]
 By setting  $\ppsi_h(t):= h^{-2}\xxi(ht)$ \KKK
 we see that $\ppsi_h(0)=\ppsi'_h(0)=0$ and that 
 \[\int_0^{+\infty}|\ppsi_h''(t)|^2e^{-ht}\,dt=h^{-1}\int_0^{+\infty}|\pphi(t)|^2e^t\,dt,\]
 and the integral in the right hand side is finite since $\pphi\in C_c(\R^+;\R^N)$, thus \KKK
  we get $\overline\y_h+\ppsi_h\in \mathcal A_h$. The minimality of $\overline\y_h$ entails the validity of the first order relation
\beeq\lab{est10}
mh^{-2}\int_0^{+\infty}{\overline\y_h''}(t)\cdot \ppsi_h''(t)\,e^{-ht}\,dt=\int_0^{+\infty}\mathbf f_h(t)\cdot\ppsi_h(t) e^{-ht}\,dt.
\eneq
 Since   $\xxi (0)=0$, using integration by parts  we have
for every $\nu>0$ and every $\tau>0$
\[
\begin{aligned}
\int_0^\tau |\xxi(t)|e^{-t}\,dt&\le\int_0^\tau\sqrt{|\xxi(t)|^2+\nu^2}e^{-t}\,dt=[-e^{-t}\sqrt{|\xxi(t)|^2+\nu^2}]_0^\tau+\int_0^\tau\frac{\xxi'(t)\cdot\xxi(t)}{\sqrt{|\xxi(t)|^2+\nu^2}}\,e^{-t}\,dt\\
&\le \nu+\int_0^{\tau}|\xxi'(t)|e^{-t}\,dt,
\end{aligned}
\]
and then by the arbitrariness of $\nu$ and $\tau$, and by repeating the same argument taking into account that $\xxi'(0)=0$, we obtain 
\[
\int_0^{+\infty} |\xxi(t)|e^{-t}\,dt\le\int_0^{+\infty}|\xxi'(t)|e^{-t}\,dt\le \int_0^{+\infty}|\xxi''(t)|e^{-t}\,dt.
\] \KKK
Therefore, 
\beeq\lab{est11}\begin{aligned}
&\displaystyle\left |\int_0^{+\infty}\mathbf f_h(t)\cdot\ppsi_h(t) e^{-ht}\,dt\right |=h^{-3}\left |\int_0^{+\infty}\mathbf f_h(h^{-1}s)\cdot\xxi(s) e^{-s}\,ds\right |\\
&\qquad\displaystyle\le h^{-3}\|\mathbf f_h\|_{\infty}\int_0^{+\infty}e^{-s}|\xxi''(s)|\,ds=h^{-3}\|\mathbf f_h\|_{\infty}\int_0^{+\infty}|\pphi(s)|\,ds.
\end{aligned}
\eneq
We recall from Lemma \ref{scaling} that $\overline\y_h(t)=\overline\u_h(ht)$, where $\overline \u_h$ is the unique solution to \eqref{pb1}. 
 Hence, by taking into account that
\[\begin{aligned}
&\displaystyle h^{-2}\int_0^{+\infty}{\overline\y_h''(t)}\cdot \ppsi_h''(t)\,e^{-ht}\,dt=\int_0^{+\infty}{\overline\u_h''}(ht)\cdot  \xxi'' (ht)\,e^{-ht}\,dt\\
&\qquad\displaystyle =h^{-1}\int_0^{+\infty}{\overline\u_h''(s)}\cdot \xxi'' (s)\,e^{-s}\,ds=h^{-1}\int_0^{+\infty}{\overline\u_h''(s)}\cdot\pphi (s)\,ds,
\end{aligned}
\]
and by using
\eqref{est10} and \eqref{est11}, we get
\beeq\lab{est14}
\displaystyle \left |\int_0^{+\infty}{\overline\u''_h}(s)\cdot \pphi (s)\,ds\right |\le m^{-1}h^{-2}\|\mathbf f_h\|_{\infty}\int_0^{+\infty}|\pphi|\,ds.
\eneq
By the arbitrariness of $\pphi\in C_c(\R^+;\R^N)$, and since $C_c(\R^+;\R^N)$ is dense in $L^1(\R^+;\R^N)$, \eqref{est14} entails
\[
\|{\overline\u''_h}\|_{\infty}\le \frac{1}{h^2m}\|\mathbf f_h\|_{\infty},
\]
that is, 
\beeq\lab{est16}
\|{\overline\y''_h}\|_{\infty}\le \frac{1}{m}\|\mathbf f_h\|_{\infty}.
\eneq

Eventually,  we have for every $t\in [0,T]$
\[ {\overline\y'_h}(t)=\v_0+\int_0^t{\overline\y''_h}(s)\,ds\qquad\mbox{and}\qquad {\overline\y}_h(t)=\u_0 +t\v_0+\int_0^t (t-s){\overline\y''_h}(s)\,ds,\]
hence \eqref{est16} yields
\beeq\label{est17*} \|{\overline\y}_h\|_{L^{\infty}(0,T)}\le |\u_0| +T|\v_0|+ \frac{T^2}{2m}\|\mathbf f_h\|_{\infty}\eneq and
\beeq\label{est18*} \|{\overline\y'_h}\|_{L^{\infty}(0,T)}\le |\v_0|+\frac{T}{m}\|\mathbf f_h\|_{\infty}.\eneq
The estimates \eqref{est16}, \eqref{est17*} and \eqref{est18*} prove the result, since the sequence $(\mathbf f_h)_{h\in\mathbb N}$ is bounded in $L^\infty(\R^+;\R^N)$.
\end{proof}

\noindent
{\bf Proof of Theorem \ref{main}.}\rm\ For every $h\in\mathbb N$,  let $\overline\y_h$ be  as in Lemma \ref{scaling}. Let $T>0$ and let $\xxi\in C^{\infty}(\R)$ with  $\spt \xxi\subset (0,T)$. \KKK Then by setting $\pphi_h(t):=\xxi (t)e^{ht}$ and by taking into account the first order minimality condition  \eqref{est10} we have
\beeq\label{ultima}\begin{aligned}
&\displaystyle -m\int_0^{T}{\overline\y'_h}(t)\cdot (h^{-2}\xxi'''(t)+ 2h^{-1}\xxi''(t)+\xxi'(t))\,dt\\&\qquad\qquad=-mh^{-2}\int_0^{T}{\overline\y'_h}(t)\cdot (\pphi''_h(t)\,e^{-ht})'\,dt\\
&\qquad\qquad\displaystyle =mh^{-2}\int_0^{T}{\overline\y''_h}(t)\cdot \pphi_h''(t)\,e^{-ht}\,dt=mh^{-2}\int_0^{+\infty}{\overline\y''_h}(t)\cdot \pphi_h''(t)\,e^{-ht}\,dt\\
&\qquad\qquad\displaystyle= \int_0^{+\infty}\mathbf f_h(t)\cdot\pphi_h(t) e^{-ht}\,dt=\int_0^{T}\mathbf f_h(t)\cdot\xxi(t)\,dt.
\end{aligned}
\eneq
By Lemma \ref{equibound}   there exists $\overline\y\in W^{2,\infty}((0,T);\R^N)$ such that, up to subsequences, $\overline\y_h\wconv \overline \x$ in $w^*-W^{2,\infty}((0,T);\R^N)$.  Therefore we get $\overline\x(0)=\u_0,\
{\overline\x'}(0)=\v_0$ and by taking into account  \eqref{ultima} and the $w^*-L^\infty(\R^+)$ convergence of $\mathbf f_h$ to $\mathbf f$
  we obtain in the limit as $h\to+\infty$ \[
-m\int_0^T{\overline\y'}(t)\cdot\xxi'(t)\,dt=\int_0^{T}\mathbf f(t)\cdot\xxi(t)\,dt.
\]
 The latter holds for every $\xxi\in C^{\infty}(\R)$ with $\spt \xxi\subset (0,T)$,  \KKK therefore $\overline\x $ is the unique solution of
\[
\left\{\begin{array}{ll}\displaystyle & m{\y''}=\mathbf f\\
&\\
& \y(0)=\u_0,\qquad {\y'}(0)=\v_0
\end{array}\right.
\]
on  $[0,T]$, hence the whole sequence $(\overline{\y}_h)_{h\in\mathbb N}$ is such that $\overline\y_h\wconv \overline \x$ in $w^*-W^{2,\infty}((0,T);\R^N)$. Since the Cauchy problem \eqref{newton0} has a unique solution $\overline\y$ on $\R^+$ and since $T$ is arbitrary,  we conclude that $\overline\y_h\wconv \overline \y$ in $w^*-W^{2,\infty}((0,T);\R^N)$ as $h\to+\infty$ for every $T > 0$ thus proving the theorem.


%

\bigskip


\end{document}